\newtheorem{theorem}{Theorem}
\newtheorem{proposition}{Proposition}
\newtheorem{corollary}{Corollary}[theorem]
\theoremstyle{definition}
\newtheorem{example}{Example}
\DeclareMathOperator{\arf}{Arf}
\title{Self and Mixed Delta-Moves on Algebraically Split Links}
\author{Anthony Bosman$^\dagger$, Devin Garcia$^\dagger$, Justyce Goode$^\dagger$, Yamil Kas-Danouche$^\dagger$, Davielle Smith$^\dagger$}
\date{\today}
\address{Department of Mathematics, Andrews University, 4260 Administration Dr., Berrien Springs, MI 49104}
\email{bosman@andrews.edu}
\thanks{2000 {\it Mathematics  Subject Classification. 57K10.}}
\thanks{$^\dagger$ Andrews University; partially supported by National Science Foundation grant DMS-1950644.}
\begin{document}

\maketitle
\thispagestyle{empty}

\begin{abstract}
        A $\Delta$-move is a local move on a link diagram. The $\Delta$-Gordian distance between links measures the minimum number of $\Delta$-moves needed to move between link diagrams. A self $\Delta$-move only involves a single component of a link whereas a mixed $\Delta$-move involves multiple (2 or 3) components. We prove that two links are mixed $\Delta$-equivalent precisely when they have the same pairwise linking number; we also give a number of results on how (mixed/self) $\Delta$-moves relate to classical link invariants including the Arf invariant and crossing number. This allows us to produce a graph showing links related by a self $\Delta$-move for algebraically split links with up to 9-crossings. For these links we also introduce and calculate the $\Delta$-splitting number and mixed $\Delta$-splitting number, that is, the minimum number of $\Delta$-moves needed to separate the components of the link.
\end{abstract}

\section{Introduction} A $\Delta$-move is a local move on a knot or link as in Figure \ref{fig:delta_moves}. We call two links $\Delta$-equivalent if we can move from a diagram for one to the other with a series of delta moves and ambient isotopy. It is well-known that all knots are $\Delta$-equivalent to the unknot and that two links are delta equivalent precisely when their pairwise linking numbers are all the same  \cite{murakami1989certain}. A $\Delta$-move defined with any particular orientation induces those for every other, therefore we only consider unoriented links. Similarly, a $\Delta$-move induces its mirror, so there is a $\Delta$-pathway between $L$ and $L'$ exactly when there is a $\Delta$-pathway between $mL$ and $mL'$.

The minimal number of $\Delta$-moves needed to deform one link $L$ into another link $L'$ is called the $\Delta$-Gordian distance, denoted $d_G^\Delta(L,L')$. If the link $L$ is $\Delta$-equivalent to the trivial link, that is, if a link is algebraically split having all pairwise linking numbers equal to zero, then we let $u^\Delta(L)$, the $\Delta$-unlinking number, denote the $\Delta$-Gordian distance between $L$ and the trivial link. The $\Delta$-unknotting number and $\Delta$-Gordian distance have been well-studied for knots \cite{nakamura1998delta} with many findings extended to algebraically split links \cite{REU21}.

\begin{figure}
\begin{center}
\begin{tikzpicture}
\node[inner sep=0pt] (Phase1) at (0,0)
    {\includegraphics[width=.25\textwidth]{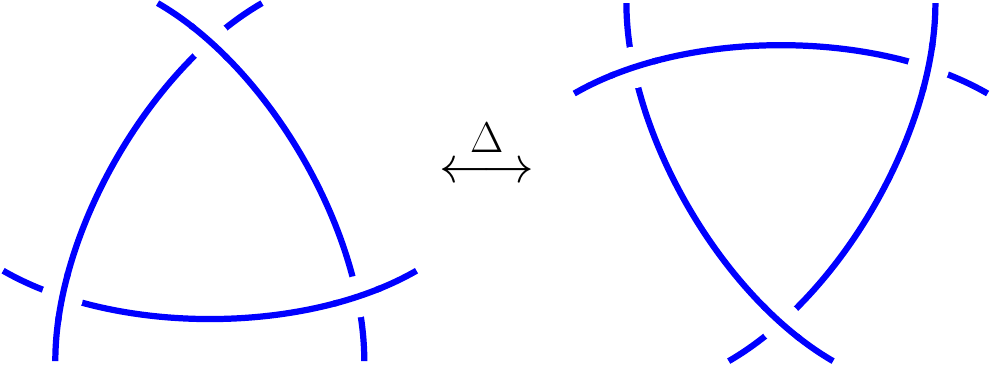}};
\node[inner sep=0pt] (Phase2) at (5,0)
    {\includegraphics[width=.25\textwidth]{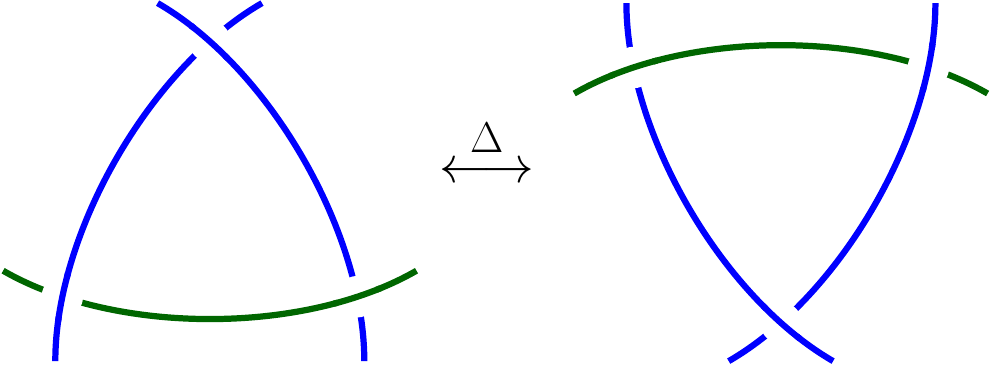}};
\node[inner sep=0pt] (Phase3) at (10,0)
    {\includegraphics[width=.25\textwidth]{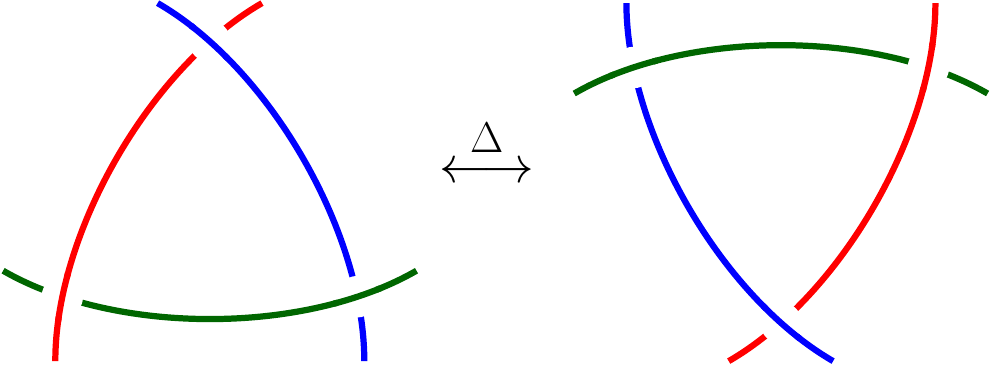}};
\end{tikzpicture}
\end{center}
\caption{Left: A self $\Delta$-move. Center and Right: Mixed $\Delta$-moves.}
    \label{fig:delta_moves}
\end{figure}

In the case of links with at least two components, we can classify $\Delta$-moves according to the number of components involved in the $\Delta$-move. In particular, if all three strands belong to the same component of the link, the move is called a self $\Delta$-move. If the strands belong to either two or three components, then we will refer to the move as a mixed $\Delta$-move. We can then introduce the notion of self or mixed $\Delta$-equivalence and self or mixed $\Delta$-Gordian distance by restricting the permitted delta moves to either only self or mixed $\Delta$-moves.

Links that are $\Delta$-equivalent have been classified using the coefficients of the Conway polynomial \cite{nakanishi02, nakanishi03}. Under this classification, we exhibit in Section 2 graphs of self $\Delta$-moves connecting two component algebraically split links with up to 9 crossings. This lets us tabulate the delta Gordian distance between such pairs of links.

In the case of mixed delta moves, we prove the following result:
\begin{theorem}
    Two links are mixed $\Delta$-equivalent if and only if they have the same components and pairwise linking numbers.
\end{theorem}

It immediately follows:
\begin{corollary}
    A link $L$ is mixed $\Delta$-equivalent to a split link if and only if $L$ is an algebraically split link.
\end{corollary}

Given this result, it is natural to consider the fewest number of mixed delta moves needed to deform an algebraically split link into a split link. We call this the mixed $\Delta$-splitting number and denote it $sp^{m\Delta}(L)$. If we instead permit any kind of delta move, then we have the $\Delta$-splitting number, denoted $sp^{\Delta}(L)$. In Section 3 we relate $sp^{m\Delta}(L)$ and $sp^{\Delta}(L)$ to the Arf invariant, $\Delta$-unknotting number, and traditional splitting number then we use these relations to determine the (mixed) $\Delta$-splitting number for algebraically split links with up to 9 crossings.

Throughout the paper we follow the Rolfsen naming convention for knots and links as used in Knot Atlas \cite{knotatlas}.

\section{Self Delta-Pathways}
The Arf invariant is a binary invariant of a knot that is changed by a $\Delta$-move on a knot. Since a self $\Delta$-move on a link is a $\Delta$-move on a single component of the link, it will change the Arf invariant of that component. Thus, a self $\Delta$-move changes the knot type of the component. A self $\Delta$-pathway between links is a series of transformations by self $\Delta$-moves and ambient isotopy. We call the number of self-$\Delta$ moves the length of the self $\Delta$-pathway.

\begin{proposition}
    If there exists two self $\Delta$-pathways of length $n_1$ and $n_2$ between the links $L$ and $L'$, then $n_1\equiv n_2\equiv\sum_{i=1}^m \arf(L_i)+\sum_{i=1}^m \arf(L_i')\pmod{2}$.
\end{proposition}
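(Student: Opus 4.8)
The plan is to isolate a single $\mathbb{Z}/2\mathbb{Z}$-valued quantity whose behavior under a self $\Delta$-move is completely controlled, and then track it along an entire pathway. Define the \emph{total Arf invariant} of an $m$-component link $L = L_1 \cup \cdots \cup L_m$ to be $A(L) := \sum_{i=1}^m \arf(L_i) \pmod 2$, the sum taken in $\mathbb{Z}/2\mathbb{Z}$. I would begin by recording the observation made just above the statement: a self $\Delta$-move, having all three of its strands on a single component $L_j$, restricts to an honest $\Delta$-move on the knot $L_j$ and leaves every other component unchanged up to ambient isotopy. Since a $\Delta$-move on a knot flips its Arf invariant, such a move changes $\arf(L_j)$ by $1$ and leaves $\arf(L_i)$ fixed for all $i \neq j$.

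The key step is to combine these local observations along a pathway. Suppose $L = K^{(0)} \to K^{(1)} \to \cdots \to K^{(n)} = L'$ is a self $\Delta$-pathway of length $n$, where each arrow is a single self $\Delta$-move followed by ambient isotopy. By the previous paragraph, $A(K^{(t)}) \equiv A(K^{(t-1)}) + 1 \pmod 2$ for every $t$. Summing (telescoping) these congruences over $t = 1, \dots, n$ gives
\[
    A(L') \equiv A(L) + n \pmod 2,
\]
or equivalently
\[
    n \equiv A(L) + A(L') \equiv \sum_{i=1}^m \arf(L_i) + \sum_{i=1}^m \arf(L_i') \pmod 2,
\]
where the last step uses that $-1 \equiv 1 \pmod 2$ so that the sign of $A(L)$ is immaterial.

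Finally, since this congruence was derived for an arbitrary self $\Delta$-pathway between $L$ and $L'$, it applies equally to the two given pathways of lengths $n_1$ and $n_2$, yielding $n_1 \equiv n_2 \equiv \sum_{i=1}^m \arf(L_i) + \sum_{i=1}^m \arf(L_i') \pmod 2$ at once. The one point requiring care — and the only real obstacle — is the claim that every self $\Delta$-move alters exactly one component's Arf invariant and no more: this rests on the fact that a self $\Delta$-move is by definition supported on a single component, and on the fact that a $\Delta$-move never fixes the Arf invariant of a knot. The former is immediate from the definition, and the latter is the standard computation that a $\Delta$-move changes the second Conway coefficient $a_2$ by $\pm 1$, whose reduction mod $2$ is the Arf invariant. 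With these in hand the bookkeeping above is entirely routine.
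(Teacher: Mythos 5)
Your proof is correct and follows essentially the same argument as the paper: both track the parity of $\sum_{i=1}^m \arf(L_i)$, note that each self $\Delta$-move flips it (since the move restricts to a $\Delta$-move on one component and fixes the rest), and conclude that any pathway of length $n$ satisfies $n \equiv \sum_{i=1}^m \arf(L_i) + \sum_{i=1}^m \arf(L_i') \pmod 2$. Your write-up simply makes explicit the telescoping step and the underlying fact that a $\Delta$-move changes $a_2$ by $\pm 1$, which the paper leaves implicit.
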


{\bf Proof:} A self $\Delta$-move on $L$ changes the parity of $\sum_{i=1}^m \arf(L_i)$. Therefore,
\[\sum_{i=1}^m \arf(L_i)+n_1\equiv\sum_{i=1}^m \arf(L_i')\pmod{2}\]
and similarly,
\[\sum_{i=1}^m \arf(L_i)+n_2\equiv\sum_{i=1}^m \arf(L_i')\pmod{2}.\]
Thus,
\[n_1\equiv\sum_{i=1}^m \arf(L_i)+\sum_{i=1}^m \arf(L_i')\equiv n_2 \pmod{2}.\]
$\square$

In particular, since the self $\Delta$-Gordian distance is defined to be the minimal self $\Delta$-pathway between two links, we have:

\begin{corollary}
Given self $\Delta$-equivalent links $L$ and $L'$,
\[d_G^{s\Delta}(L,L')\equiv \sum_{i=1}^m \arf(L_i)+\sum_{i=1}^m \arf(L_i')\pmod{2}.\]
\end{corollary}

The Arf invariant can be extended to proper links \cite{hoste1984arf,robertello1965invariant}, that is, links $L$ such that \[\sum_{1\leq i<j\leq m}lk(L_i,L_j)\equiv 0\pmod{2}.\] In particular, the Arf invariant is well defined for algebraically split links (i.e. links with vanishing pairwise linking number). It has been shown that a $\Delta$-move changes the parity of the Arf invariant of a proper link and therefore $d^{\Delta}(L,L')\equiv \arf(L)+\arf(L')\pmod{2}$ \cite{REU21}. Thus restricting to just self $\Delta$-moves, we have $d^{s\Delta}(L,L')\equiv \arf(L)+\arf(L')\pmod{2}$.\\

\begin{example}
    Since both the links $L8a2$ and $L8a4$ can be deformed into the trivial link with a single self $\Delta$-move, $1\leq d_G^{s\Delta}(L8a2,L8a4)\leq2$. Yet
    \[d_G^{s\Delta}(L8a2,L8a4)\equiv \arf(L8a2)+\arf(L8a4)\equiv 0\pmod{2}\]
and therefore $d_G^{s\Delta}(L8a2,L8a4)=2$.\\
\end{example}

Additionally, since each self $\Delta$-move only changes the knot type of a single component, we have the following:\\

\begin{proposition}
Given self $\Delta$-equivalent links $L$ and $L'$,
\[d_G^{s\Delta}(L,L')\geq\left|\sum_{i=1}^m u^\Delta(L_i)-\sum_{i=1}^m u^\Delta(L_i')\right|.\]
\end{proposition}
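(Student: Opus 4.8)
The plan is to exploit the observation, emphasized in the text above, that a self $\Delta$-move alters the knot type of exactly one component of the link. First I would fix a minimal self $\Delta$-pathway from $L$ to $L'$, say of length $n = d_G^{s\Delta}(L,L')$, consisting of $n$ self $\Delta$-moves interspersed with ambient isotopies. Since each move acts on a single component, I can label every move in the pathway by the index of the component it affects, and let $n_i$ be the number of moves labelled $i$. Because every move carries exactly one label, these counts partition the pathway: $\sum_{i=1}^m n_i = n$.

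Next I would isolate, for each fixed $i$, the subsequence of moves labelled $i$. The key point is that the moves labelled $j\neq i$, together with all the ambient isotopies, leave the knot type of the $i$-th component unchanged: a $\Delta$-move on another component is supported away from component $i$, and ambient isotopy preserves knot type. Consequently, forgetting the other components and reading off only the moves labelled $i$ yields a genuine $\Delta$-pathway of the knot $L_i$ to the knot $L_i'$ using exactly $n_i$ moves. Appending an optimal sequence of $u^\Delta(L_i')$ further $\Delta$-moves that unknots $L_i'$ then unknots $L_i$, so $u^\Delta(L_i)\le n_i + u^\Delta(L_i')$; running the same argument with the roles of $L_i$ and $L_i'$ reversed gives $u^\Delta(L_i')\le n_i + u^\Delta(L_i)$. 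Together these yield $n_i \ge \bigl| u^\Delta(L_i) - u^\Delta(L_i') \bigr|$.

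Finally I would sum over the components and apply the triangle inequality for the absolute value:
\[
d_G^{s\Delta}(L,L') = \sum_{i=1}^m n_i \ge \sum_{i=1}^m \bigl| u^\Delta(L_i) - u^\Delta(L_i') \bigr| \ge \left| \sum_{i=1}^m u^\Delta(L_i) - \sum_{i=1}^m u^\Delta(L_i') \right|,
\]
which is the claimed bound.

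I expect the main obstacle to be the second step: carefully justifying that restricting attention to a single component really does produce a valid $\Delta$-pathway of that component viewed as a knot. This amounts to checking that self $\Delta$-moves performed on distinct components are genuinely independent---that the interleaving order and the presence of the other (possibly linked) components do not interfere with the knot type of the isolated component---and that the total count of moves is correctly apportioned among the components. Once this bookkeeping is in place, the remaining inequalities are routine applications of the subadditivity (triangle inequality) of the $\Delta$-unknotting number.
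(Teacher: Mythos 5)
Your proof is correct and rests on exactly the observation the paper itself uses to justify this proposition (which it states without a written proof): a self $\Delta$-move alters the knot type of only one component, and does so by a single $\Delta$-move, so the per-component $\Delta$-unknotting numbers can change by at most one per move. In fact your bookkeeping establishes the slightly stronger component-wise bound $d_G^{s\Delta}(L,L')\geq\sum_{i=1}^m\left|u^\Delta(L_i)-u^\Delta(L_i')\right|$, from which the stated inequality follows by the triangle inequality.
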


\begin{example}
    There is a self-$\Delta$ pathway from $mL5a1$ to $L7n2$ to $L9n3$ to $L9n6$. Since
    \[\arf(mL5a1)+\arf(L9n6)=1+0=1\]
    therefore $d^{s\Delta}(mL5a1,L9n6)$ is either 1 or 3. However, since $mL5a1$ has as its components two unknots, $L9n6$ has as its components an unknot and $5_1$, and $u^{\Delta}(5_1)=3$ \cite{okada90}, we conclude $d^{s\Delta}(mL5a1,L9n6)=3$.
\end{example}

Given a link $L$, define
\[\delta_1(L)=a_{1}(L)\text{ and}\]
\[\delta_2(L)=a_{3}(L) - a_{2}(L) \times (a_2(L_1)+a_2(L_2)) \]
where the $a_i$ are coefficients of the Conway polynomial. Nakanishi-Ohyama showed that two 2-component links are self $\Delta$-equivalent precisely when they have the same values for $\delta_1$ and $\delta_2$ \cite{nakanishi02}\cite{nakanishi03}. The sign of $\delta_i$ is changed by mirroring the link. The below graphs exhibit the self $\Delta$-pathways for prime links with up to 9 crossing in the families $(\delta_1,\delta_2)=(0,0)$, $(0,\pm1)$, and $(0,\pm2)$. Moreover, for each pair of links $L$ and $L'$, we will use the above restrictions to tabulate the possible values for $d_G^{s\Delta}(L,L')$. Note that if there exists a self $\Delta$-pathway between $L$ and $L'$, then there exists such a pathway between their mirrors $mL$ and $mL'$. Therefore $d_G^{s\Delta}(L,L')=d_G^{s\Delta}(mL,mL')$. Values with asterisks denote upper bounds to $d_G^{s\Delta}(L,L')$; the exact value is a positive integer with same parity at most the upper bound.\\


\begin{minipage}{7cm}
    \begin{tabular}{|c|c|c|c|}
     \hline
     \multicolumn{4}{|c|}{($\delta_{1}$,$\delta_{2}$)=(0,0)} \\
     \hline
     & Trivial & L8a2 & L8a4  \\
     \hline
     Trivial & 0 & 1 & 1  \\
     \hline
     L8a2 & - & 0 & 2 \\
     \hline
     L8a4 & - & - & 0 \\
     \hline
    \end{tabular}
\end{minipage}
\begin{minipage}{10cm}
    \begin{center}
    \includegraphics[width=4cm]{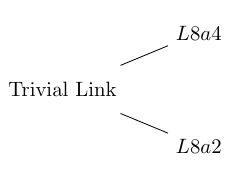}
    \captionof{figure}{($\delta_{1}$,$\delta_{2}$)=(0,0)}
    \label{fig:0_pathways}
    \end{center}
\end{minipage}

\begin{minipage}{10cm}
    \begin{center}
    \includegraphics[width=9.3cm]{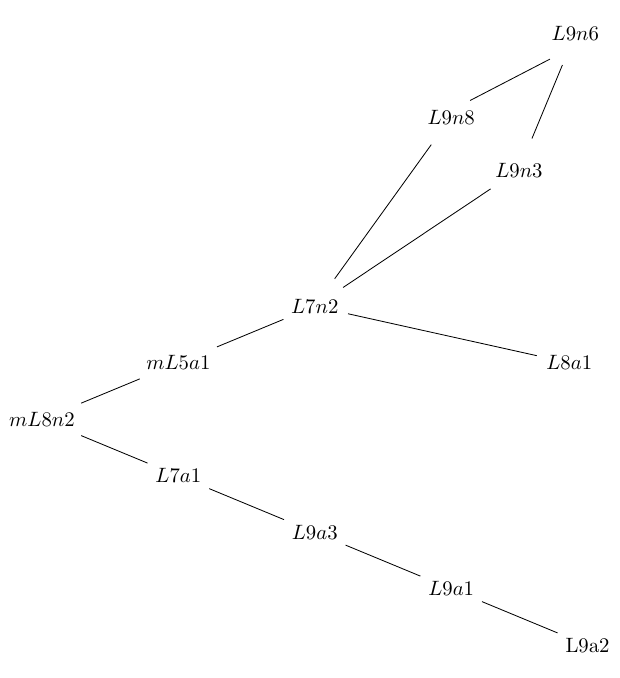}
    \captionof{figure}{($\delta_{1}$,$\delta_{2}$)=(0,$\pm$1)}
    \label{fig:1_pathways}
    \end{center}
\end{minipage}
\begin{minipage}{7cm}
    \begin{center}
    \includegraphics[width=7cm]{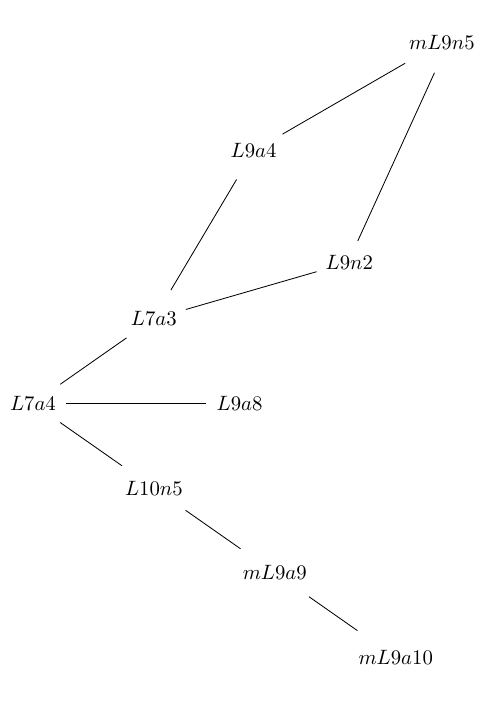}
    \captionof{figure}{($\delta_{1}$,$\delta_{2}$)=(0,$\pm$2)}
    \label{fig:2_pathways}
    \end{center}
\end{minipage}


\begin{table}[h]
    \centering
    \begin{tabular}{|c|c|c|c|c|c|c|c|c|c|c|}
 \hline
 \multicolumn{11}{|c|}{($\delta_{1}$,$\delta_{2}$)=(0,$\pm$1)} \\
 \hline
   & L7n2 & mL8n2 & L9n3 & L7a1 & L9n8 & L9a3 & L8a1 & L9a1 & L9n6 & L9a2 \\
 \hline
 mL5a1 & 1 & 1 & 2 & 2 & 2 & 3* & 2 & 4* & 3 & 5*\\
 \hline
 L7n2 & 0 & 2 & 1 & 3* & 1 & 4* & 1 & 1-5 & 2 & 6* \\
 \hline
 mL8n2 & - & 0 & 3 & 1 & 2 & 2 & 3* & 3* & 4 & 4* \\  
 \hline
 L9n3 & - & - & 0 & 4* & 2 & 5* & 2 & 6* & 1 & 7*\\
 \hline
 L7a1 & - & - & - & 0 & 4* & 1 & 4* & 2 & 5* & 3* \\
 \hline
 L9n8 & - & - & - & - & 0 & 5* & 2 & 6* & 1 & 7* \\
 \hline
 L9a3 & - & - & - & - & - & 0 & 5* & 1 & 6* & 2 \\
 \hline
 L8a1 & - & - & - & - & - & - & 0 & 6* & 3 & 7* \\
 \hline
 L9a1 & - & - & - & - & - & - & - & 0 & 7* & 1\\
 \hline
 L9n6 & - & - & - & - & - & - & - & - & 0 & 8*\\
 \hline
\end{tabular}
    \caption{*Upper bound on the self $\Delta$-Gordian distance.}
    \label{tab:1}
\end{table}
\vspace{7.5mm}

\begin{table}[]
    \centering
    \begin{tabular}{|c|c|c|c|c|c|c|c|c|}
 \hline
 \multicolumn{9}{|c|}{($\delta_{1}$,$\delta_{2}$)=(0,$\pm$2)} \\
 \hline
 Blank & L7a3 & L7a4 & L9a4 & L9a8 & mL9a9 & mL9a10 & L9n2 & mL9n5 \\
 \hline
 L7a3 & 0 & 1 & 1 & 2 & 3* & 4* & 1 & 2   \\
 \hline
 L7a4 & - & 0 & 2 & 1 & 2 & 3* & 2 & 3   \\
 \hline
 L9a4 & - & - & 0 & 3 & 4* & 5* & 2 & 1   \\
 \hline
 L9a8 & - & - & - & 0 & 3* & 4* & 3 & 4   \\
 \hline
 mL9a9 &- &- &- &- &0 &1 &4* &5*  \\
 \hline
 mL9a10 &- &- &- &- &- &0 &5* &6   \\
 \hline
 L9n2 &- &- &- &- &- &- &0 &1   \\
 \hline
 mL9n5 &- &- &- &- &- &- &- &0   \\
 \hline
\end{tabular}
    \caption{*Upper bound on the self $\Delta$-Gordian distance.}
    \label{tab:2}
\end{table}

\section{Delta-Splitting Number}
The splitting number of a link is defined to be the minimal number of crossing changes involving different components needed to deform a link into a split link \cite{batson13}. The splitting number has been tabulated for links with up to 9 crossings \cite{cha13}.

Similarly, as any algebraically split link is $\Delta$-equivalent to the trivial link, we can define the $\Delta$-splitting number $sp^\Delta(L)$ to be the minimum number of $\Delta$-moves needed to deform $L$ into a split link. As the split link one obtains is not necessarily the trivial link, we have $sp^\Delta(L)\leq u^\Delta(L)$. The mixed $\Delta$-splitting number $sp^{m\Delta}(L)$ is defined to be the minimum number of mixed $\Delta$-moves needed to deform $L$ into a split link. The following theorem and its corollary shows that $sp^{m\Delta}(L)$ is also well-defined for algebraically split links.





\begin{theorem}
    Two links are mixed $\Delta$-equivalent if and only if they have the same pairwise linking numbers and components.
\end{theorem}

\begin{proof}
    One direction follows immediately from the fact that $\Delta$-moves preserve linking number and does not change knot type of the components. For the converse, suppose $m$-component links $L$ and $L'$ have the same pairwise linking numbers and equal components $L_i=L_i'$ for $1\leq i\leq m$. Let $\lambda_{ij}$ denote $lk(L_i,L_j)=lk(L_i',L_j')$ for all $1\leq i<j\leq m$ up to renumbering components. There exists a link $S$ with the same linking information as $L$ comprising the same $m$ components $S_i=L_i$ such that any two components $S_i$ and $S_j$ intersect in $\lambda_{ij}$ clasps as in Figure \ref{fig:clasp_link_S}. We will argue that $L$ is mixed $\Delta$-equivalent to $S$, thus $L'$ is also mixed $\Delta$-equivalent to $S$, and hence $L$ and $L'$ are mixed $\Delta$-equivalent to one another.

    \begin{figure}[h]
     \includegraphics[height=2.4in]{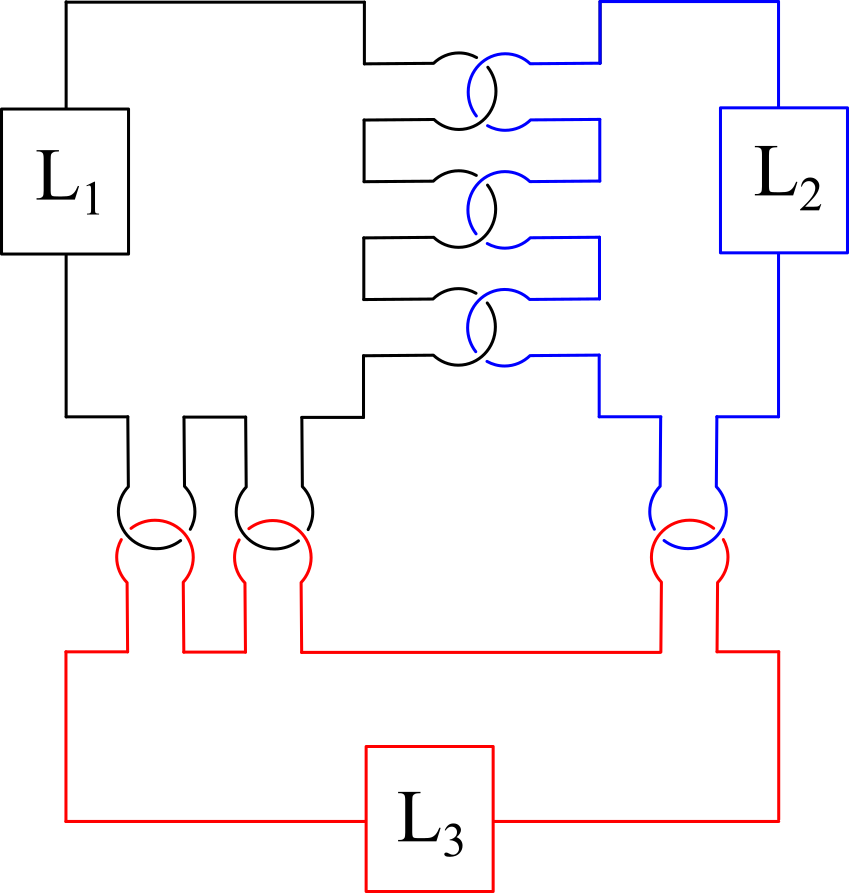}
    \caption{A link $S$ with linking numbers $\lambda_{12}=3$, $\lambda_{13}=2$, and $\lambda_{23}=-1$.}
    \label{fig:clasp_link_S}
\end{figure}

    First observe that given the components $L_i$ and $L_j$ in $L$, we can deform them so that the components intersect in pairs of clasps such that the difference between clasps contributing +1 as those clasps contributing -1 to the linking number is $\lambda_{ij}$. If there exists both positive and negative clasps, we can arrange the diagram such that two clasps with opposite contributions to linking number are adjacent to each other, possibly with other clasps representing intersections of the component with itself or other components between the claps, as in Figure \ref{fig:clasp_link}.a.

    \begin{figure}[h]
     \includegraphics[height=2.4in]{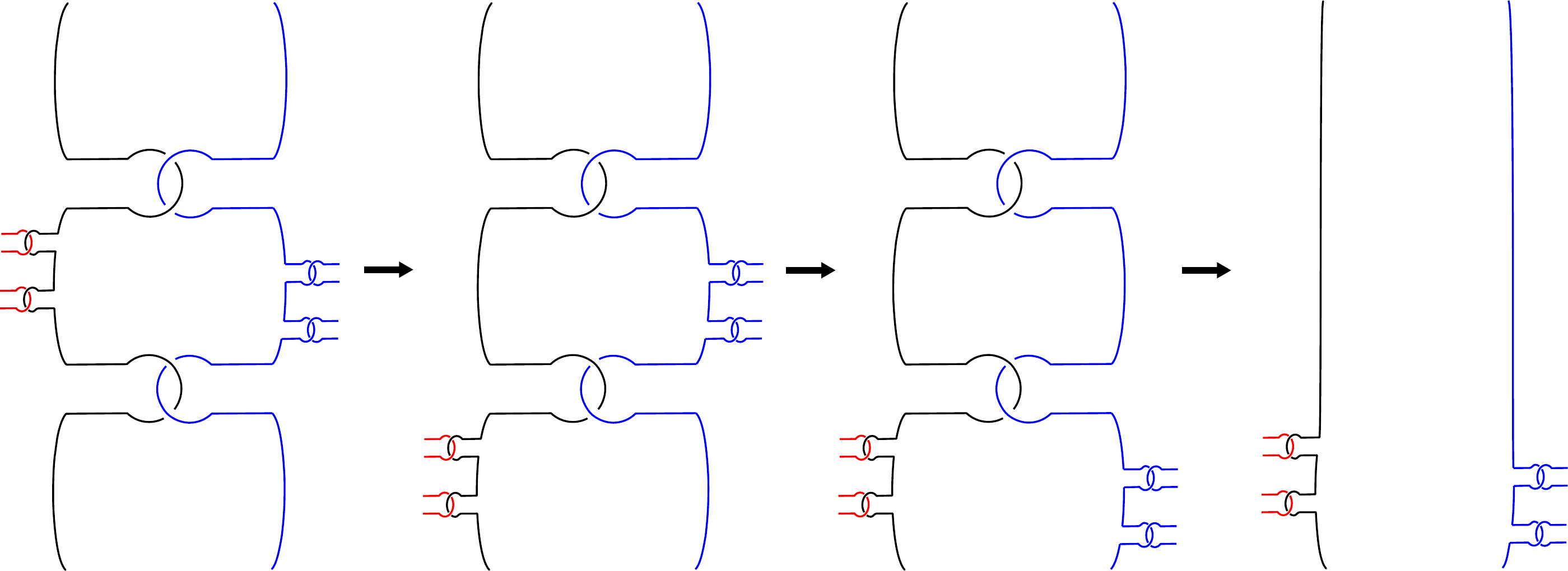}
    \caption{Separating two components of an algebraically split link with mixed-$\Delta$ moves.}
    \label{fig:clasp_link}
\end{figure}

\begin{figure}[h]
    \includegraphics[height=1.1in]{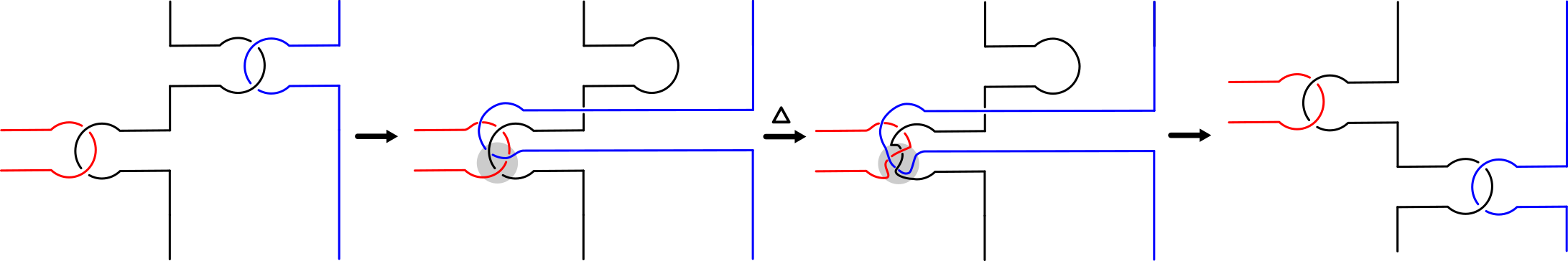}
    \caption{Moving a clasp over another with a mixed-$\Delta$ move.}
    \label{fig:clasp_move}
\end{figure}

    We can move a clasp between different components over another clasp using a single mixed-$\Delta$ move, as in Figure \ref{fig:clasp_move}. Repeated application of this move allows us to move any pairs of positive and negative clasps adjacent to each other with no other clasps between them as in Figure \ref{fig:clasp_link}.b and \ref{fig:clasp_link}.c. The two clasps then cancel, as in Figure \ref{fig:clasp_link}.d.

    We can repeat this process until all clasps between $L_i$ or $L_j$ are negative or positive. But then there are exactly $|\lambda_{ij}|$ such clasps, the sign of which is determined by the sign of $\lambda_{ij}$. Hence, we have transformed $L$ into $S$ via mixed $\Delta$-moves.
\end{proof}

Recall that an algebraically split link is a link with pairwise vanishing splitting number. Thus it follows:
\begin{corollary}
    A link $L$ is mixed $\Delta$-equivalent to a split link if and only if $L$ is an algebraically split link.
\end{corollary}

Hence the mixed $\Delta$-splitting number, denoted $sp^{m\Delta}(L)$, is well-defined for an algebraically split link $L$. And since mixed $\Delta$-moves are a subset of $\Delta$-moves more generally (self or mixed), we have $sp^{m\Delta}(L)\geq sp^{\Delta}(L)$. And as a mixed $\Delta$-move does not change the knot type of any of the components of the link, the split link obtained by mixed $\Delta$-moves on $L$ will be the disjoint union of the components of $L$.\\

\begin{proposition}
Given an algebraically split link $L$, \[sp^{m\Delta}(L)\equiv \arf(L)+\sum_{i=1}^m \arf(L_i)\pmod{2}.\] 
\end{proposition}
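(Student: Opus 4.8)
The plan is to track two mod-$2$ quantities along a sequence of mixed $\Delta$-moves realizing $sp^{m\Delta}(L)$: the Arf invariant $\arf(L)$ of the whole link and the sum $\sum_{i=1}^m \arf(L_i)$ of the component Arf invariants. First I would observe that a mixed $\Delta$-move, being in particular a $\Delta$-move, preserves all pairwise linking numbers; hence an algebraically split link stays algebraically split (and in particular proper, since $\sum_{i<j} lk \equiv 0$) at every stage, so $\arf$ of the link is well defined throughout the process. Because the excerpt already records that any $\Delta$-move on a proper link flips the parity of its Arf invariant, each of the $N := sp^{m\Delta}(L)$ moves toggles $\arf(L) \bmod 2$.

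Second, I would invoke the observation made just before the proposition, that a mixed $\Delta$-move changes the knot type of no component. Consequently each $\arf(L_i)$ is unchanged throughout, and the split link $S$ obtained at the end of the sequence is precisely the disjoint union $L_1 \sqcup \cdots \sqcup L_m$. Combining with the first step, after the $N$ moves we obtain the congruence $\arf(L) + N \equiv \arf(S) \pmod 2$.

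Third, and this is the crux, I would establish additivity of the Arf invariant over split components, namely $\arf(L_1 \sqcup \cdots \sqcup L_m) \equiv \sum_{i=1}^m \arf(L_i) \pmod 2$. My approach is to choose a (possibly disconnected) Seifert surface $F = F_1 \sqcup \cdots \sqcup F_m$, with $F_i$ a Seifert surface for $L_i$, so that $H_1(F;\mathbb{Z}/2)$ splits as the direct sum of the $H_1(F_i;\mathbb{Z}/2)$; the associated $\mathbb{Z}/2$-valued quadratic (Seifert) form is then the orthogonal direct sum of the component forms, and the Arf invariant of quadratic forms is additive under direct sum. Substituting $\arf(S) = \sum_i \arf(L_i)$ into the congruence above gives $N \equiv \arf(S) + \arf(L) \equiv \arf(L) + \sum_{i=1}^m \arf(L_i) \pmod 2$, which is the claim (note that the argument applies to any mixed $\Delta$-sequence terminating in a split link, so in particular to the minimizing one).

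The step I expect to be the main obstacle is the additivity lemma: one must confirm that the Robertello--Hoste Arf invariant used in the paper is genuinely computed by the $\mathbb{Z}/2$-Seifert form, so that the direct-sum decomposition applies, and that allowing a disconnected Seifert surface is legitimate for a split link. If one prefers to sidestep Seifert surfaces, the same additivity can be extracted directly from the defining properties of the link Arf invariant in \cite{hoste1984arf,robertello1965invariant}; either route reduces the whole proposition to this single fact, after which the parity bookkeeping in the first two steps is immediate.
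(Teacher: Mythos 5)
Your proof is correct and follows essentially the same route as the paper's: both identify the terminal split link with the disjoint union $L_1\sqcup\cdots\sqcup L_m$ (since mixed $\Delta$-moves preserve the knot type of every component), invoke the cited fact that a $\Delta$-move on a proper link flips the parity of $\arf$ to get $sp^{m\Delta}(L)\equiv\arf(L)+\arf(S)\pmod 2$, and finish with the split-additivity $\arf(S)\equiv\sum_{i=1}^m\arf(L_i)\pmod 2$. The only difference is that you supply a justification for that last additivity step (via Seifert forms, with an appropriate caveat, or via the defining properties in \cite{hoste1984arf,robertello1965invariant}), whereas the paper simply asserts it in its final congruence.
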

\begin{proof}
    Let $S$ denote the split link comprising the disjoint union of the components of $L$. Then,
    \[sp^{m\Delta}(L)=d^{m\Delta}_G(L,S)\equiv \arf(L)+\arf(S) \equiv \arf(L)+\sum_{i=1}^m \arf(L_i)\pmod{2}.\]
\end{proof}

\begin{example}

        \begin{figure}
            \centering
        \begin{tikzpicture}
            \node[inner sep=0pt] (Phase1) at (0,0) {\includegraphics[width=.25\textwidth]{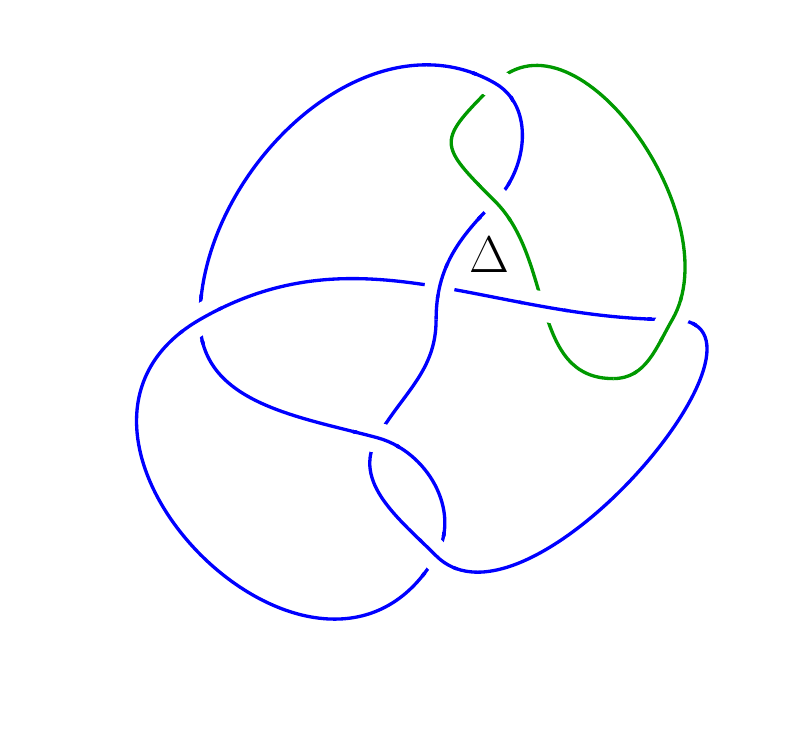}};
            
            \node[inner sep=0pt] (Phase2) at (5,0) {\includegraphics[width=.25\textwidth]{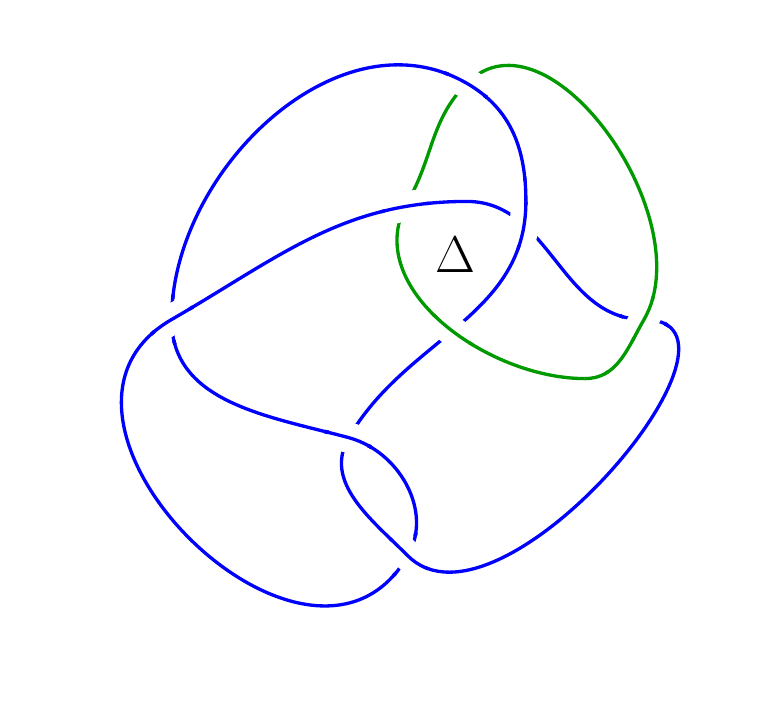}};
            
            \node[inner sep=0pt] (Phase3) at (10,0) {\includegraphics[width=.25\textwidth]{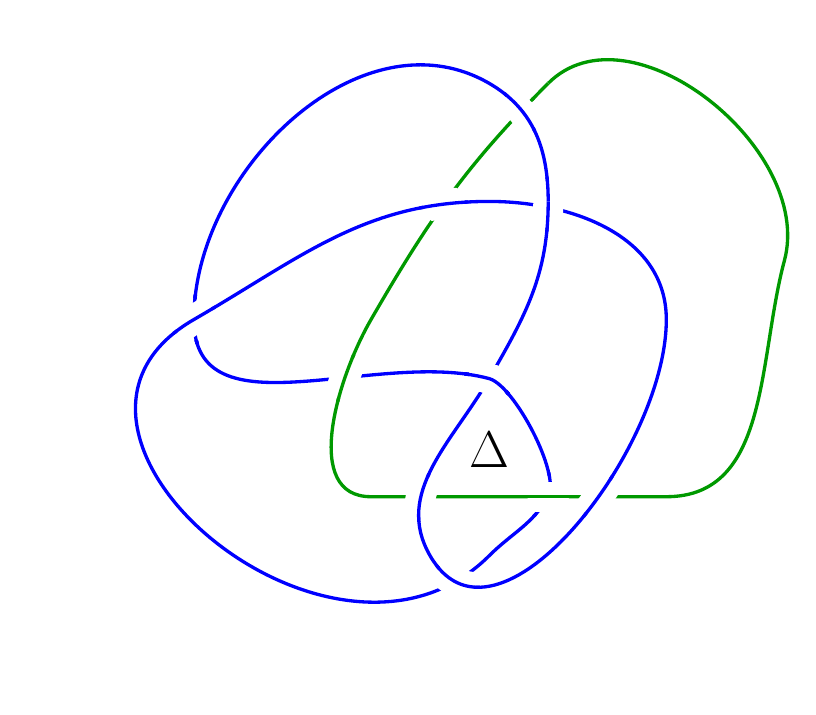}};
            
            \node[inner sep=0pt] (Phase4) at (7,-3) {\includegraphics[width=.25\textwidth]{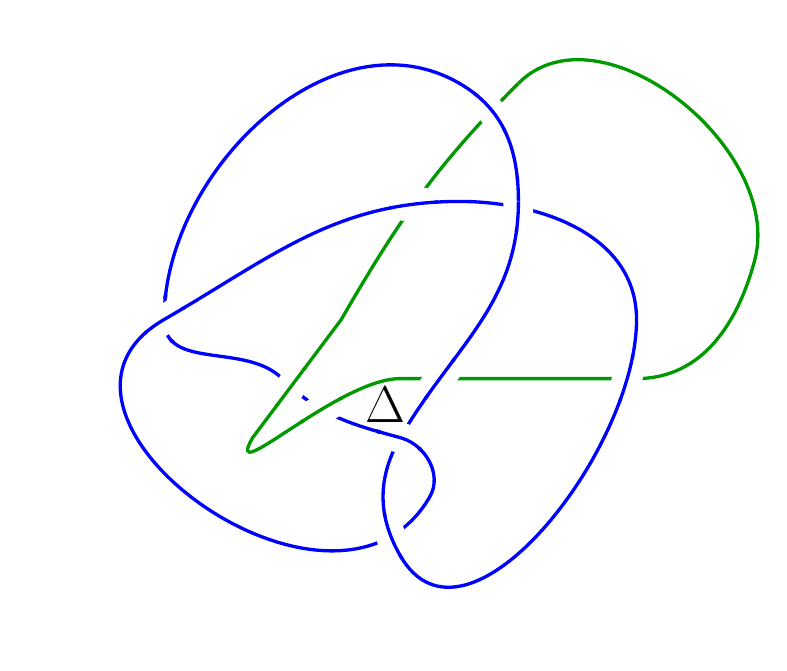}};
            
            \node[inner sep=0pt] (Phase5) at (2.25,-3) {\includegraphics[width=.30\textwidth]{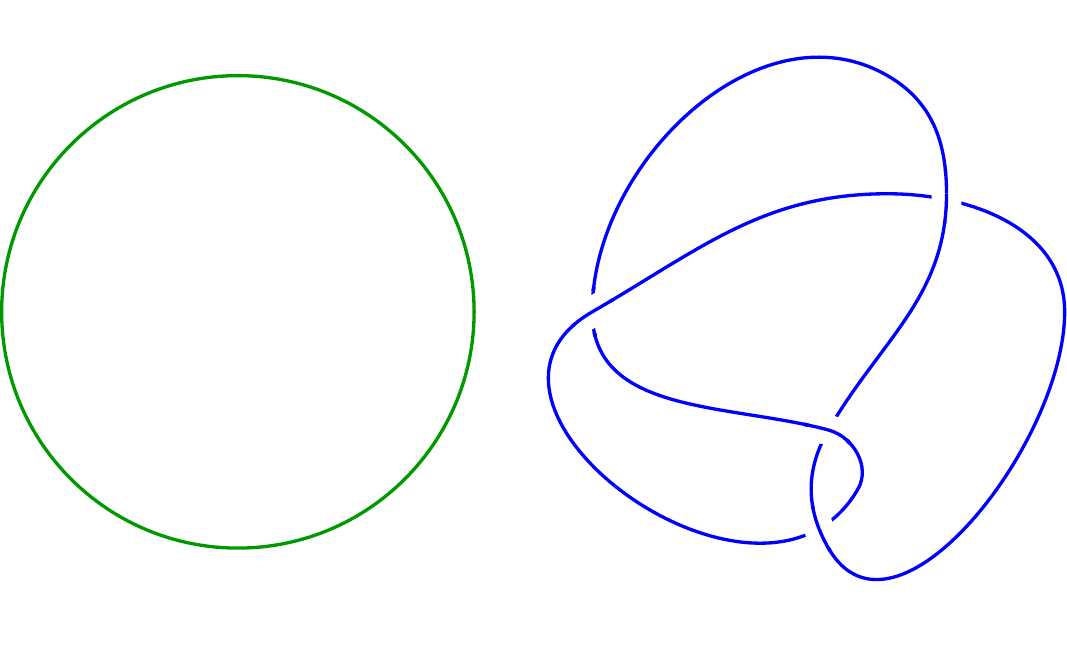}};
            
            \draw[<->,thick] (Phase1.east) -- (Phase2.west);
            \node at (2.5,0.25) {$\Delta$};
            \node[rotate=50] at (9.3,-2.3) {$\Delta$};
            \node at (7.5,0) {$\approx$};
            \node at (5,-2.55) {$\approx$};
            \draw[<->,thick] (Phase3.south) -- (Phase4.east);
        \end{tikzpicture}
        \caption{$L8a2$ is two mixed $\Delta$-moves away from a split link.}
        \label{fig:splitL8a2}
        \end{figure}
    
    The link $L8a4$ comprises an unknot and trefoil. There exists a length two mixed $\Delta$-pathway to the split link: $L8a2, mL8n2, 0_1\sqcup 4_1$. Therefore $1\leq sp^{m\Delta}(L8a4)\leq 2$. However, as
    \[sp^{m\Delta}(L8a4)\equiv \arf(L8a4)+\left(\arf(0_1)+\arf(m3_1)\right)= 1+(0+1)\equiv 0\pmod{2}\]
    we have $sp^{m\Delta}(L8a4)=2$. But as $L8a4$ can be deformed into the trivial link with a single self $\Delta$-move on the trefoil, we have $sp^{\Delta}(L8a4)=1$.
\end{example}

Moreover, we have the following relationship with the $\Delta$-unknotting and $\Delta$-unlinking number.\\

\begin{proposition}
    Given an algebraically split link $L=L_1\sqcup\cdots\sqcup L_m$, we have:
    \[sp^{m\Delta}(L)\geq u^{\Delta}(L)-\sum_{i=1}^m u^{\Delta}(L_i).\]
\end{proposition}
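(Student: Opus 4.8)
The plan is to bound the $\Delta$-unlinking number $u^\Delta(L)$ from above by exhibiting an explicit (unrestricted) $\Delta$-pathway from $L$ to the trivial link, and then simply rearrange. First I would invoke the definition of $sp^{m\Delta}(L)$ to fix a sequence of $sp^{m\Delta}(L)$ mixed $\Delta$-moves carrying $L$ to a split link $S$. Because a mixed $\Delta$-move never alters the knot type of any single component — a fact already recorded in the paragraph preceding the statement — the resulting split link $S$ must be exactly the disjoint union $L_1\sqcup\cdots\sqcup L_m$.

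Next I would unknot the components one at a time. Since $S$ is split, its components occupy pairwise disjoint balls, so for each $i$ I can realize an optimal $\Delta$-unknotting sequence for $L_i$, of length $u^\Delta(L_i)$, entirely inside the ball containing $L_i$ without disturbing any other component. Carrying this out for every $i$ uses $\sum_{i=1}^m u^\Delta(L_i)$ further $\Delta$-moves and transforms $S$ into the trivial $m$-component link; note that because each such move is supported in a single ball, the link remains split at every intermediate stage.

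Concatenating the two stages produces a $\Delta$-pathway from $L$ to the trivial link of length $sp^{m\Delta}(L)+\sum_{i=1}^m u^\Delta(L_i)$. Since $u^\Delta(L)$ is by definition the minimum length of any $\Delta$-pathway from $L$ to the trivial link, we obtain
\[u^\Delta(L)\le sp^{m\Delta}(L)+\sum_{i=1}^m u^\Delta(L_i),\]
and rearranging gives the claimed inequality.

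The argument is essentially a subadditivity (triangle-inequality) estimate for the $\Delta$-Gordian distance, so I do not expect a serious obstacle. The one point requiring care is the claim that the split link produced by the mixed moves is literally $L_1\sqcup\cdots\sqcup L_m$ rather than merely a split link whose components are $\Delta$-equivalent to the $L_i$; this is precisely what allows the \emph{per-component} unknotting numbers $u^\Delta(L_i)$ to appear on the right-hand side, and it is guaranteed by the fact that mixed $\Delta$-moves preserve each component's knot type. The accompanying observation that independent unknotting in disjoint balls keeps the link split throughout ensures the concatenated pathway is a legitimate sequence of $\Delta$-moves from $L$ to the trivial link.
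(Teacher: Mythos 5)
Your proof is correct and follows essentially the same route as the paper's: deform $L$ to the split union $S = L_1\sqcup\cdots\sqcup L_m$ via the optimal mixed $\Delta$-sequence (using that mixed moves preserve component knot types), then unknot each component with $u^\Delta(L_i)$ moves, and conclude $u^\Delta(L)\le sp^{m\Delta}(L)+\sum_{i=1}^m u^\Delta(L_i)$ by minimality of $u^\Delta(L)$. Your added care about realizing each unknotting sequence inside a disjoint ball is a detail the paper leaves implicit, but the argument is the same.
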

\begin{proof}
    Let $S$ denote the link comprising the split union of $L_1,...,L_m$. Then $L$ can be deformed into $S$ with $sp^{m\Delta}(L)$ mixed $\Delta$-moves and each component of $S$ can be deformed into the unknot in $u^{\Delta}(L_i)$ $\Delta$-moves. Since mixed $\Delta$-moves preserve the knot type of the components and $u^\Delta(L)$ is the minimal number of $\Delta$-moves needed to deform $L$ into the trivial link, we have \[u^{\Delta}(L) \leq sp^{m\Delta}(L)+\sum_{i=1}^m u^{\Delta}(L_i).\]
\end{proof}

\begin{example}
    The link $L9a18$ has two unknotted components and the unknotting number for $L9a18$ is three [Bos+]. Therefore, $sp^{m\Delta}(L9a18)\geq 3$. Moreover, there exists a mixed $\Delta$-pathway of length three from $L9a18$ to $L7a4$ to $L5a1$ to the trivial link. Thus $sp^{m\Delta}(L9a18)=3$.
\end{example}

Finally, a mixed $\Delta$-move can be achieved with two crossing changes, each between different components, as in Figure \ref{fig:mixed_delta_crossing}. Thus $2sp^{m\Delta}(L)\geq sp(L)$ or equivalently:

\begin{proposition}
    $sp^{m\Delta}(L)\geq \frac{1}{2}sp(L).$
\end{proposition}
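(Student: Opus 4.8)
The plan is to establish the inequality $sp^{m\Delta}(L) \geq \frac{1}{2}sp(L)$ by showing that every mixed $\Delta$-move can be realized by exactly two crossing changes between distinct components, so that an optimal mixed $\Delta$-splitting sequence yields a crossing-change splitting sequence of at most twice the length. First I would recall the definitions: the splitting number $sp(L)$ is the minimum number of crossing changes between different components needed to split $L$, while $sp^{m\Delta}(L)$ counts mixed $\Delta$-moves. The key local observation, illustrated in Figure \ref{fig:mixed_delta_crossing}, is that a mixed $\Delta$-move on a link diagram can be decomposed as a composition of two crossing changes, each of which involves two different components; this is a purely local fact about the tangle pictured in the $\Delta$-move, and I would verify it by exhibiting the two crossings whose changes convert the ``before'' picture into the ``after'' picture.

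Next I would run the counting argument. Suppose $L$ is split by an optimal sequence of $sp^{m\Delta}(L)$ mixed $\Delta$-moves, producing a split link $S$. By the local decomposition, each of these mixed $\Delta$-moves can be replaced by two crossing changes between different components, giving a sequence of $2\,sp^{m\Delta}(L)$ such crossing changes that also splits $L$. Since the splitting number is the \emph{minimum} number of inter-component crossing changes that split $L$, this produces the bound $sp(L) \leq 2\,sp^{m\Delta}(L)$, which rearranges to the claimed $sp^{m\Delta}(L) \geq \frac{1}{2}sp(L)$.

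The one subtlety I would flag is ensuring that the two crossing changes arising from the mixed $\Delta$-move genuinely count toward the splitting number, i.e.\ that both crossings are between \emph{distinct} components rather than self-crossings of a single component. This is exactly where the ``mixed'' hypothesis is used: because a mixed $\Delta$-move involves two or three distinct components, the strands meeting at each of the two crossings in the decomposition belong to different components, so each crossing change is admissible for the splitting number. I expect this verification—confirming from the figure that the decomposition respects the inter-component condition—to be the main (though minor) obstacle; the remainder is the straightforward counting inequality above.
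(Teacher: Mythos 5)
Your proposal is correct and follows essentially the same argument as the paper: decompose each mixed $\Delta$-move into two crossing changes between distinct components (the paper's Figure \ref{fig:mixed_delta_crossing}), then count to get $sp(L)\leq 2\,sp^{m\Delta}(L)$. The subtlety you flag is the right one to check—in a two-component mixed $\Delta$-move the decomposition must move the strand belonging to the singleton component so that both crossing changes are inter-component—and this is precisely what the paper's figure exhibits.
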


\begin{figure}[h]
    \begin{tikzpicture}
    \node[inner sep=0pt] (cc1) at (0,0)
        {\includegraphics[width=.15\textwidth]{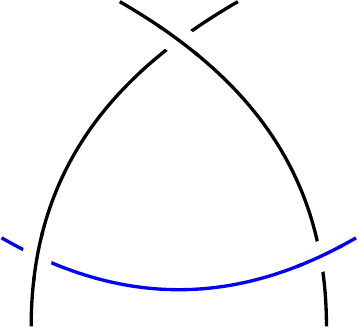}};
    \node[inner sep=0pt] (cc2) at (4,0)
        {\includegraphics[width=.15\textwidth]{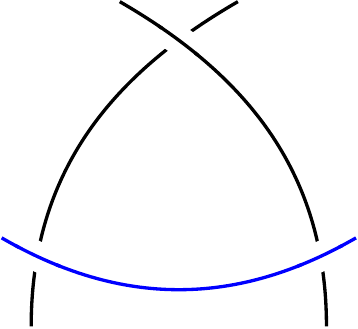}};
    \node[inner sep=0pt] (cc3) at (7,0)
        {\includegraphics[width=.15\textwidth]{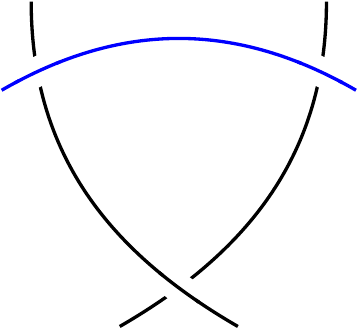}};
    \node[inner sep=0pt] (cc4) at (11,0)
        {\includegraphics[width=.15\textwidth]{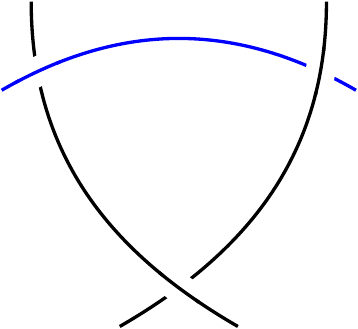}};
    
    \draw[->,thick] (cc1.east) -- (cc2.west);
    \node at (2,0.2) {\footnotesize{Crossing}};
    \node at (2,-0.2) {\footnotesize{Change}};
    \node at (5.5,0) {$\approx$};
    \draw[->,thick] (cc3.east) -- (cc4.west);
    \node at (9,0.2) {\footnotesize{Crossing}};
    \node at (9,-0.2) {\footnotesize{Change}};
    \end{tikzpicture}
    \caption{A mixed $\Delta$-move as two crossing changes.}
    \label{fig:mixed_delta_crossing}
\end{figure}

Together, these bounds help us determine the $\Delta$-splitting number and mixed $\Delta$-splitting number for algebraically split links with up to 9 crossings.

    \begin{figure}
        \centering
        \includegraphics[height=3in]{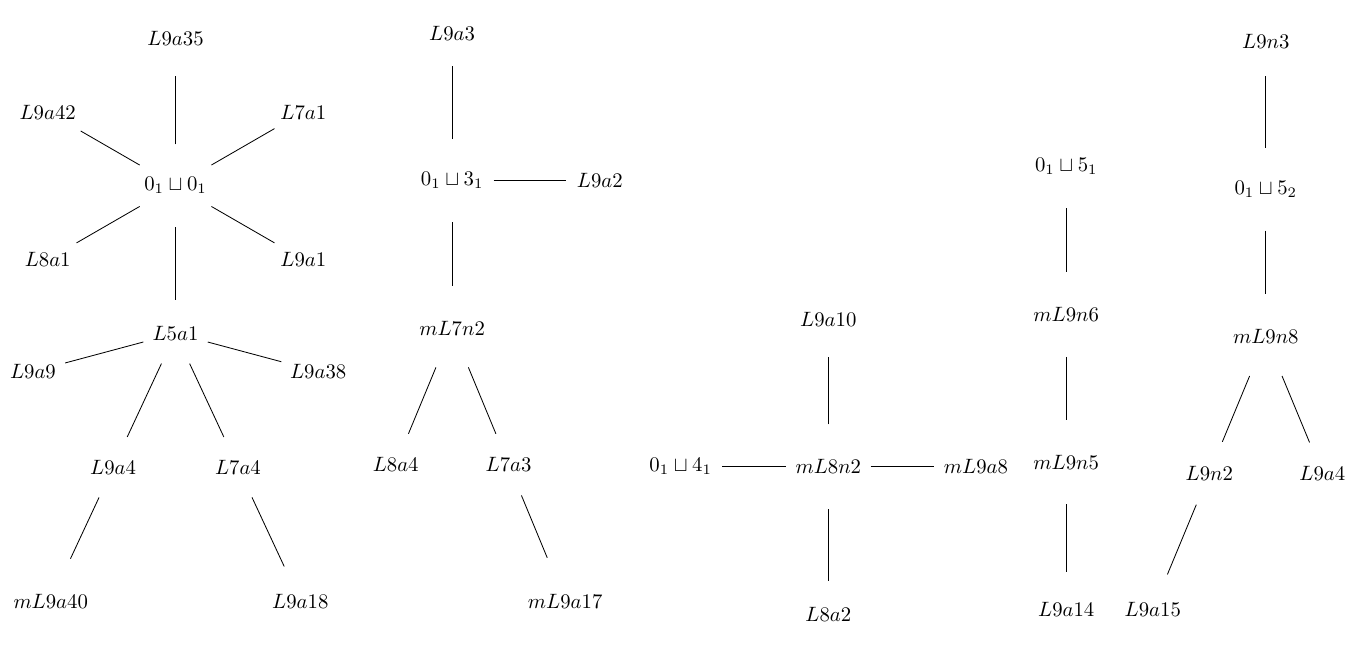}
        \caption{Edges represent a mixed $\Delta$-move.}
    \label{fig:mixed_delta_graph}
    \end{figure}

    \begin{table}
    \centering
    \scriptsize{
    \begin{tabular}{|c|c||c||c||c|c|c|c|c|} 
        \hline
        Link & Components & $sp^\Delta(L)$ & $sp^{m\Delta}(L)$ & $u^\Delta(L)$ & $\Sigma u^\Delta(L_i)$ & $sp(L)$ & $\text{Arf}(L)$ & $\Sigma \text{Arf}(L_i)$\\
        \hline
        L5a1 & $0_1,0_1$ & 1 & 1 & 1 & 0 & 2 & 1 & 0\\
        \hline
        L6a4 & $0_1,0_1,0_1$ & 1 & 1 & 1 & 0 & 2 & 1 & 0\\
        \hline
        L7a1 & $0_1,0_1$ & 1 & 1 & 1 & 0 & 2 & 1 & 0\\
        \hline
        L7a3 & $0_1,3_1$ & 2 & 2 & 3 & 1 & 2 & 1 & 1\\
        \hline
        L7a4 & $0_1,0_1$ & 2 & 2 & 2 & 0 & 2 & 0 & 0\\
        \hline
        L7n2 & $0_1,m3_1$ & 1 & 1 & 2 & 1 & 2 & 0 & 1\\
        \hline
        L8a1 & $0_1,0_1$ & 1 & 1 & 1 & 0 & 2 & 1 & 0\\
        \hline
        L8a2 & $0_1,4_1$ & 1 & 2 & 1 & 1 & 2 & 1 & 1\\
        \hline
        L8a4 & $0_1,m3_1$ & 1 & 2 & 1 & 1 & 2 & 1 & 1\\
        \hline
        L8n2 & $0_1,4_1$ & 1 & 1 & 2 & 1 & 2 & 0 & 1\\
        \hline
        L9a1 & $0_1,0_1$ & 1 & 1 & 1 & 0 & 2 & 1 & 0\\
        \hline
        L9a2 & $0_1,3_1$ & 1 & 1 & 2 & 1 & 2 & 0 & 1\\
        \hline
        L9a3 & $0_1,3_1$ & 1 & 1 & 2 & 1 & 2 & 0 & 1\\
        \hline
        L9a4 & $0_1,5_2$ & 2 & 2 & 4 & 2 & 2 & 0 & 0\\
        \hline
        L9a8 & $0_1,4_1$ & 2 & 2 & 3 & 1 & 2 & 1 & 1\\
        \hline
        L9a9 & $0_1,0_1$ & 2 & 2 & 2 & 0 & 2 & 0 & 0\\
        \hline
        L9a10 & $0_1,4_1$ & 2 & 2 & 3 & 1 & 2 & 1 & 1\\
        \hline
        L9a14 & $0_1,5_1$ & 1 - 3 & 1 or 3 & 4 or 6 & 3 & 2 & 0 & 1\\
        \hline
        L9a15 & $0_1,5_2$ & 1 - 3 & 1 or 3 & 3 or 5 & 2 & 2 & 1 & 0\\
        \hline
        L9a17 & $0_1,m3_1$ & 1 - 3 & 1 or 3 & 2 or 4 & 1 & 2 & 0 & 1\\
        \hline
        L9a18 & $0_1,0_1$ & 2 or 3 & 3 & 3 & 0 & 2 & 1 & 0\\
        \hline
        L9a35 & $0_1,0_1$ & 1 & 1 & 1 & 0 & 2 & 1 & 0\\
        \hline
        L9a38 & $0_1,0_1$ & 2 & 2 & 2 & 0 & 2 & 0 & 0\\
        \hline
        L9a40 & $0_1,0_1$ & 2 or 3 & 3 & 3 & 0 & 4 & 1 & 0\\
        \hline
        L9a42 & $0_1,0_1$ & 1 & 1 & 1 & 0 & 2 & 1 & 0\\
        \hline
        L9a53 & $0_1,0_1,0_1$ & 1 & 1 & 1 & 0 & 2 & 1 & 0\\
        \hline
        L9a54 & $0_1,0_1,0_1$ & 2 or 3 & 3 & 3 & 0 & 4 & 1 & 0\\
        \hline
        L9n2 & $0_1,5_2$ & 2 & 2 & 4 & 2 & 2 & 0 & 0\\
        \hline
        L9n3 & $0_1,m5_2$ & 1 & 1 & 3 & 2 & 2 & 1 & 0\\
        \hline
        L9n5 & $0_1,m5_1$ & 2 & 2 & 5 & 3 & 2 & 1 & 1\\
        \hline
        L9n6 & $0_1,m5_1$ & 1 & 1 & 4 & 3 & 2 & 0 & 1\\
        \hline
        L9n8 & $0_1,m5_2$ & 1 & 1 & 3 & 2 & 2 & 1 & 0\\
        \hline
        L9n25 & $0_1,0_1,0_1$ & 2 & 2 & 2 & 0 & 2 & 0 & 0\\
        \hline
        L9n27 & $0_1,0_1,0_1$ & 2 & 2 & 2 & 2 & 4 & 0 & 0\\
        \hline
    \end{tabular}
    }
    \caption{Determine (mixed) $\Delta$-splitting number.}
    \label{tab:mixed_delta}
    \end{table}

\clearpage
\printbibliography

@article{okada90,
    title     = {Delta-unknotting operation and the second coefficient of the Conway polynomial},
    author    = {Masae Okada},
    journal   = {J. Math. Soc. Japan},
    volume    = {42},
    number    = {4},
    pages     = {713-717},
    year      = {1990},
    publisher = {},
    doi       = {https://doi.org/10.2969/jmsj/04240713}
}

@article{cha13,
  title     = {Splitting numbers of links},
  author    = {Cha, Jae Choon and Friedl, Stefan and Powell, Mark},
  journal   = {Proceedings of the Edinburgh Mathematical Society},
  volume    = {60},
  number    = {3},
  pages     = {},
  year      = {2013},
  publisher = {},
  doi       = {http://dx.doi.org/10.1017/S0013091516000420}
}

@article{batson13,
  title     = {A link splitting spectral sequence in khovanov homology},
  author    = {Batson, Joshua and Seed, Cotton},
  journal   = {Duke Mathematical Journal},
  volume    = {164},
  number    = {5},
  pages     = {},
  year      = {2013},
  publisher = {},
  doi       = {http://dx.doi.org/10.1215/00127094-2881374}
}

@article{REU21,
  title     = {The delta-unlinking number of algebraically split links},
  author    = {Bosman, Anthony and Green, Jeannelle and Palacios, Gabriel and Reyes, Moises and Reyes, Noe},
  %journal   = {},
  %volume    = {},
  %number    = {},
  %pages     = {},
  year      = {2021},
  %publisher = {Preprint},
  archivePrefix = {arXiv},
  eprint = {2107.06791}
}

@article{nakanishi03,
  title     = {Delta link homotopy for two component links III},
  author    = {Nakanishi, Yasutaka and Ohyama, Yoshiyuki},
  journal   = {J. Math. Soc. Japan},
  volume    = {55},
  number    = {3},
  pages     = {641-654},
  year      = {2003},
  publisher = {},
  doi       = {https://doi.org/10.2969/jmsj/1191418994}
}

@article{nakanishi02,
  title     = {Delta link homotopy for two component links},
  author    = {Nakanishi, Yasutaka},
  journal   = {Topology and its Applications},
  volume    = {121},
  number    = {1-2},
  pages     = {169--182},
  year      = {2002},
  publisher = {Elsevier},
  doi       = {https://doi.org/10.1016/S0166-8641(01)00116-X}
}

@article{murakami1989certain,
  title     = {On a certain move generating link-homology},
  author    = {Murakami, Hitoshi and Nakanishi, Yasutaka},
  journal   = {Mathematische Annalen},
  volume    = {284},
  number    = {1},
  pages     = {75--89},
  year      = {1989},
  publisher = {Springer},
  doi       = {https://doi.org/10.1007/BF01443506}
}

@article{hoste1984arf,
  title     = {The Arf invariant of a totally proper link},
  author    = {Hoste, Jim},
  journal   = {Topology and its Applications},
  volume    = {18},
  number    = {2-3},
  pages     = {163--177},
  year      = {1984},
  publisher = {Elsevier},
  doi       = {https://doi.org/10.1016/0166-8641(84)90008-7}
}

@article{robertello1965invariant,
  title     = {An invariant of knot cobordism},
  author    = {Robertello, Raymond A},
  journal   = {Communications on Pure and Applied Mathematics},
  volume    = {18},
  number    = {3},
  pages     = {543--555},
  year      = {1965},
  publisher = {Wiley Subscription Services, Inc., A Wiley Company New York},
  doi       = {https://doi.org/10.1002/cpa.3160180309}
}

@misc{knotatlas,
  title   = {The {K}not {A}tlas},
  author  = {Bar-Natan, Dror and Morrison, Scott},
  url     = {http://katlas.org}
}

@article{nakamura1998delta,
  title     = {Delta-unknotting number for knots},
  author    = {Nakamura, K. and Nakanishi, Y. and Uchida, Y.},
  journal   = {Journal of Knot Theory and Its Ramifications},
  volume    = {7},
  number    = {05},
  pages     = {639--650},
  year      = {1998},
  publisher = {World Scientific},
  doi       = {https://doi.org/10.1142/S0218216598000334}
}

\end{document}